\documentclass[11pt]{amsart}

\usepackage{geometry}
\usepackage{mathtools}
\usepackage{amsmath,amssymb,color, array, verbatim}
\usepackage{enumerate}
\usepackage{verbatim}

\newtheorem{thm}{Theorem}

\newtheorem{lem}[thm]{Lemma}

\newtheorem{prop}[thm]{Proposition}

\theoremstyle{definition}

\newtheorem{rem}[thm]{Remark}

\numberwithin{equation}{section}

\newcommand{\Z}{{\mathbb Z}}

\newcommand{\Tr}{{\mathrm{Tr}}}
\newcommand{\diag}{\mathrm{diag}}

\newcommand{\F}{\mathbb{F}}
\newcommand{\M}{\mathbb{M}}
\newcommand{\rvline}{\hspace*{-\arraycolsep}\vline\hspace*{-\arraycolsep}}

\begin{document}

\title{$m$-nil-clean nonderogatory matrices}

\author[A. Pojar]{Andrada Pojar}
\address{Technical University of Cluj-Napoca, Department of Mathematics, Str. Memorandumului 28, 400114, Cluj-Napoca, Romania}
\email{andrada.pojar@math.utcluj.ro}

\begin{abstract}
We prove that if $\F$ is a field of positive odd characteristic $p,$ and $m,$ and $n$ are positive integers such that $m\geq2,$ and $n\leq p,$ every $n\times n$ nonderogatory matrix $A\in \M_n(\F)$ which is sum of $m$ idempotents and a nilpotent, has a decomposition $A=E_1+E_2+\dots+E_m+V,$ such that $E_i^2=E_i,$ for every $i\in \{1,\dots,m\},$ and $V^{[\frac{p-2}{m}]+2}=0.$
\end{abstract}

\keywords{$m$-nil-clean decomposition; nonderogatory matrix; companion matrix; nilpotent matrix; idempotent matrix }

\subjclass[2010]{15A24, 15A83, 16U99} 

\maketitle

\section{Introduction}

Let $\F$ be a field with positive odd characteristic $p.$
As usual, the letter $\F_p=\mathbb{Z}_p$ will stand for the prime field of $p$ elements having characteristic $p$, for any positive integer $n$, the notation $\mathbb{M}_n(\F)$ will denote the full matrix ring of $n\times n$ matrices over $\F,$
 and $\mathbb{M}_{m,n}(\F)$ will denote the full matrix ring of  matrices over $\F,$ with $m$ rows and $n$ columns.

A square matrix is {\it nonderogatory} if its characteristic
and minimal polynomials coincide. A matrix is nonderogatory if and only if it is similar to a
companion matrix $C$.
We recall that a companion matrix $C\in Mn(\F)$ is a matrix of the form
$$C=C_{c_0,c_1,\ldots, c_{n-1}}=\left(\begin{array}{ccccc}
0 & 0  &\ldots & 0 & -c_0 \\
1 & 0  &\ldots & 0 & -c_1\\
\vdots & \vdots  &\cdots  & \vdots & \vdots \\
0 & 0 &\ldots & 1 & -c_{n-1}
 \end{array}\right).$$

 A square matrix $A$ over $\F$ is nil-clean if there exist an idempotent $E$ and a nilpotent $N$ such that $A=E+N.$ We consider decompositions of nonderogatory matrices $A,$ such that there exist idempotents $E_i$, $i\in \{1,2,\dots,m\},$ and a nilpotent $N,$ with $A=E_1+E_2\dots+E_m+N,$ which are $m$-nil-clean decompositions.

Nil-clean decompositions are related to clean ones, introduced by Nicholson in \cite{N}, when investigating exchange rings and were first studied by Diesl in \cite{Diesl}. An important result appeared in \cite{BCDM} and \cite{KLZ} about them is: every $n\times n$ matrix over a division ring $D$ is nil-clean if and only if $D=\F_2.$

A theme related to a set $S$ of nilpotents of a ring is the study of boundness of the nilpotence index of S--the existence of a positive integer $n$ such that $a^n=0$ for all $a\in S$ , and it was first studied first in \cite{KWZ}. \v{S}ter has proved that there exists a nil-clean decomposition of every matrix of $\mathbb{M}_n(\F_2),$ with nilpotent $N$ having nilpotence index at most $4$ (i.e $N^4=0.$). Moreover this result has sharpness, that is there exist $4\times 4$ companion matrices over $\F_2,$ which cannot be decomposed as a sum of an idempotent and a nilpotent matrix of nilpotence index at most $3.$ Also in \cite{Pojar-nonderog-p-pot-nil} it has been proved that every nonderogatory matrix $A$ over a field of positive odd characteristic $p,$ that is sum of a $p$-potent matrix and a nilpotent matrix, has a decomposition $A=E+V,$ such that $E^p=E,$ and $V^3=0.$ Moreover, this result has sharpness, that is there exists a $3\times 3$ companion matrix over a field of characteristic $3,$ which is sum of a tripotent and a nilpotent, but cannot be decomposed as a sum of a tripotent and a nilpotent matrix of nilpotence index at most $2.$

An extension of nil-clean decompositions to finite fields of odd cardinality $q,$ was done in \cite{AM}: every matrix over such a field is the sum of a $q$-potent $E=E^q,$ and a nilpotent. In \cite{B} there is even more -- the nilpotent $N$ involved in such a decomposition can be with nilpotence index at most $3$ (i.e. $N^3=0).$

In \cite{BrMe} it has been proved that every nil-clean nonderogatory $n\times n$ matrix $A,$ over a field of positive characteristic, $p,$ with trace of $A$ not equal to $1$ has a decomposition $A=E+V,$ such that $E^2=E,$ and $V^{p+1}=0.$ If $\mbox{trace}(A)=1,$ then there is a similar decomposition with $V^{p+2}=0.$

We study decompositions of nonderogatory matrices that are sums of $m$ idempotents and a nilpotent. By Lemma 3.1 from \cite{C} we know that the trace of such a matrix is an integer multiple of unity of $\F.$ If the characteristic $p$ of the field is less then the size $n$ of the matrix, then a companion matrix with trace an integer multiple of unity is just nil-clean, so we can assume $n\leq p.$ The following theorem provides a characterization for $m$-nil-clean companion matrices.
\begin{thm}\label{teoremaMNilClean1}\cite{C}
Let $m\geq 2$ be an integer. Let $\mathbb{F}$ be a field of positive odd characteristic $p,$   and $1<n\leq p$. Let $C=C_{c_0,c_1,\ldots, c_{n-1}}$ be a  companion matrix.
Let $c$ be the nonnegative integer such that $-c_{n-1}=c\cdot 1.$

The following hold:
\begin{enumerate}
\item   if $c=0$ and $mn-1<p$ then $C$ is  $m-$nil-clean if and only if $C$ is nilpotent or $C-(m-1)I_n$ is unipotent;
\item if $c=t$, $1\leq t\leq m$ then $C$ is $t-$nil-clean ($1-$nil-clean is just nil-clean), so is $m-$nil-clean;
\item if $c\in \{m,m+1,\ldots, mn-2,mn-1\}$ then $C$ is $m-$nil-clean,
\item if $mn-2\geq p$ then $C$ is $m-$nil-clean;
\item Assume $mn-2<p,$
      \begin{enumerate}
      \item if $c=mn$ and $p=mn-1$ then $C$ is nil-clean, so is $m-$nil-clean;
      \item if $c=mn$ and $p=mn$ then $C$ is $m-$nil-clean if and only if $C$ is nilpotent or $C-(m-1)I_n$ is a unipotent matrix;
      \item if $c=mn$, $p>mn$  then $C$ is $m-$nil-clean if and only if $C-(m-1)I_n$ is a unipotent matrix.
      \item if  $c>mn$ then $C$ is not $m-$nil-clean.
      \end{enumerate}
\end{enumerate}
\end{thm}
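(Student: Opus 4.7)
The backbone of the argument is a single trace identity. If $C = E_1+\cdots+E_m+N$ with the $E_i$ idempotent and $N$ nilpotent, then $\Tr(N)=0$ and $\Tr(E_i)=\operatorname{rank}(E_i)\cdot 1_\F$, so
\[
c\cdot 1_\F \;=\; \Tr(C) \;=\; (r_1+\cdots+r_m)\cdot 1_\F,\qquad r_i:=\operatorname{rank}(E_i)\in\{0,1,\ldots,n\}.
\]
Hence $c \equiv r_1+\cdots+r_m \pmod p$ with $0\le r_1+\cdots+r_m\le mn$. This master constraint explains the entire case split and supplies the necessary directions essentially for free, once one tracks the image of $[0,mn]$ modulo $p$.

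For the easy sufficient directions only trivial decompositions are needed: if $C$ is nilpotent take every $E_i=0$; if $C-(m-1)I_n=I_n+M$ is unipotent then $C = \underbrace{I_n+\cdots+I_n}_{m\text{ copies}}+M$; and if $C$ is $t$-nil-clean with $t<m$ one pads with $m-t$ zero idempotents. This handles (2), (5)(a), and the ``$\Leftarrow$'' parts of (1), (5)(b), (5)(c). The necessary directions then come from counting admissible sums: in (1)$\Rightarrow$ with $mn<p$, the only $s\in[0,mn]$ with $s\equiv 0\pmod p$ is $s=0$, forcing every $E_i=0$ and $C$ nilpotent; if $mn=p$, also $s=mn$ is admissible, forcing every $E_i=I_n$ and $C-(m-1)I_n$ unipotent. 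The same arithmetic yields $\Rightarrow$ in (5)(b), (c), and the non-existence in (5)(d), where no element of $[0,mn]$ matches $c$ modulo $p$.

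The main obstacle is the constructive side of (3), (4), and the remaining ``$\Leftarrow$'' pieces of (5): for each admissible $c$ one must pick ranks $r_1+\cdots+r_m=c$ with $r_i\in[0,n]$ (which is combinatorially possible precisely in the stated ranges) and exhibit honest idempotents $E_i$ of these ranks whose sum differs from $C$ by a nilpotent. My approach would be to conjugate $C$ into its cyclic basis and build each $E_i$ as a block idempotent of the form $\operatorname{diag}(I_{r_i},0)$ perturbed by a strictly lower-triangular shear, chosen so that $\sum_i E_i$ reproduces the diagonal data of $C$ while leaving a strictly lower-triangular remainder. The delicate point is arranging the $m$ shears to be simultaneously compatible in one basis; this coherence, combined with the freedom to permute the ranks $r_i$ and to exploit the cyclic action of $C$, is the technical core that would occupy the bulk of the argument.
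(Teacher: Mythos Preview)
The present paper does not prove this theorem at all: it is quoted verbatim from \cite{C} and used only as background, so there is no in-paper argument to compare your proposal against. What can be inferred about the proof in \cite{C} comes from the way this paper invokes it inside the proof of Lemma~\ref{mainLem} (``By the proofs of Lemmas 3.2 and 3.3 from \cite{C} \ldots'').

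Your trace framework is correct and is exactly the engine behind the necessary directions: the identity $c\cdot 1=\sum_i r_i\cdot 1$ with $r_i=\operatorname{rank}E_i\in\{0,\dots,n\}$ immediately gives the ``only if'' halves of (1), (5)(b), (5)(c) and the impossibility in (5)(d), and your trivial decompositions cover (2), (5)(a) and the ``if'' halves of (1), (5)(b), (5)(c). (A small remark: since $p$ is prime and $m,n\ge 2$, the subcase $mn=p$ you single out is vacuous, which is why the ``or unipotent'' alternative in (1) is never actually needed for the $\Rightarrow$ direction.)

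The genuine gap is the constructive half of (3) and (4). Your plan --- choosing all $E_i$ of the shape $\diag(I_{r_i},0)$ plus a strictly lower--triangular ``shear'' in a \emph{single} basis so that $C-\sum_i E_i$ is strictly lower triangular --- does not match what is actually done, and it is not clear it can be made to work: the companion matrix $C$ has zero diagonal except in the $(n,n)$ slot, so ``reproducing the diagonal data of $C$'' with $m$ such idempotents forces $\sum_i r_i\equiv 0$ on almost every diagonal position, which is the wrong constraint. The construction in \cite{C}, as reflected in this paper, is sequential rather than simultaneous: one repeatedly applies the similarity of Lemma~\ref{use1} (Remark~\ref{formP}) to write a companion matrix as $P\bigl(\diag(1,\dots,1,0,\dots,0)+C'\bigr)P^{-1}$ with $C'$ again a companion matrix, thereby peeling off one diagonal idempotent $F_i$ at a time and lowering the trace parameter, until the residual companion matrix has trace in $\{1,\dots,n-1\}$ and can be finished by an ordinary nil-clean decomposition. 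The key technical point you are missing is that each peel changes the basis (by an upper-triangular $P_i$ with $1$'s on the diagonal), and the idempotents live in successively conjugated frames $F_i=(P_i\cdots P_1)\diag(1,\dots,1,0,\dots,0)(P_i\cdots P_1)^{-1}$; there is no single basis in which all $m$ idempotents are simultaneously of block-diagonal-plus-shear type.
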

By the assumption $mn-1\geq p$ every nonderogatory matrix with trace an integer multiple of unity of $\F$ is $m$-nil-clean, considering the fact that companion matrices with trace in $\{1,2,\dots,mn-1\}$ are $m$-nil-clean. We prove that if $mn-1\geq p\geq n,$ every nonderogatory matrix $A$ over a field of positive odd characteristic $p,$ that is sum of $m$ idempotents and a nilpotent, has a decomposition $A=E_1+E_2+\dots+E_m+V,$ such that $E_i^p=E_i,$ for $i\in \{1,\dots,m\},$ and $V^{[\frac{p-2}{m}]+2}=0.$ Moreover this result has sharpness, that is for $m=2$, and $n=[\frac{p-2}{m}]+2,$ there exists an $n\times n$ companion matrix, over a field of odd characteristic $p,$   which  are sum of two idempotents and a nilpotent, but cannot be decomposed as a sum of two idempotents and a nilpotent matrix of nilpotence index at most $[\frac{p-2}{m}]+1.$ If $mn-1\leq p-1$ then since $m\geq 2,$ then
  $$n\leq \frac{p}{m}\leq \frac{p-2}{m}+1<[\frac{p-2}{m}]+2.$$
  Therefore since the nilpotent $V$ involved in an $m$-nil-clean decomposition of $A$ is such that $V^n=0,$ we get $V^{[\frac{p-2}{m}]+2}=0.$

\section{Useful lemmas}

The following lemma provides a useful tool in the study of $m$-nil-clean decompositions for nonderogatory matrices.

\begin{lem}\label{use1}\cite{Pojar-comp}
 Let $k$ and $n$ be nonzero natural numbers, $n\geq k,$ and $\F$ be a field. Let $a_1,a_2,\dots,a_k$ be integer multiples of the unity. Every companion matrix $C\in M_n(\mathbb{F})$ is similar to the matrix  $C'+\diag (a_1,a_2,\dots,a_k,0,\dots,0),$ for some companion matrix $C'.$
\end{lem}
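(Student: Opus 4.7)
The plan is to exhibit an explicit change of basis of $\F^n$ under which $C$ becomes $C'+D$, where $D=\diag(a_1,\ldots,a_k,0,\ldots,0)$. The starting observation is that $e_1$ is a cyclic vector for any companion matrix $C$: one has $C^i e_1=e_{i+1}$ for $0\le i\le n-1$, so $\{e_1,Ce_1,\ldots,C^{n-1}e_1\}$ is just the standard basis of $\F^n$. I would build a new basis $\{f_1,\ldots,f_n\}$ by setting $f_1=e_1$ and then
\[
f_{i+1}=(C-a_i I_n)f_i\ \text{ for } 1\le i\le k, \qquad f_{i+1}=Cf_i\ \text{ for } k<i<n.
\]

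Unfolding the recursion gives $f_{i+1}=p_i(C)f_1$ for a monic polynomial $p_i\in\F[x]$ of degree $i$, namely $p_i=x^{\max(i-k,0)}\prod_{j=1}^{\min(i,k)}(x-a_j)$. Consequently the matrix transforming $\{f_1,Cf_1,\ldots,C^{n-1}f_1\}$ into $\{f_1,\ldots,f_n\}$ is upper unitriangular, and $\{f_1,\ldots,f_n\}$ is itself a basis of $\F^n$.

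The last step is to read off the matrix of $C$ in this new basis directly from the defining relations. For $i\le k$, the identity $Cf_i=a_i f_i+f_{i+1}$ places $a_i$ on the diagonal and $1$ on the subdiagonal of column $i$; for $k<i<n$, the identity $Cf_i=f_{i+1}$ places $0$ on the diagonal and $1$ on the subdiagonal of column $i$; the last column consists of the coefficients $(-c'_0,\ldots,-c'_{n-1})^T$ obtained by expanding $Cf_n$ in the new basis. Splitting off the diagonal perturbation $D$ leaves precisely the companion matrix $C'=C_{c'_0,c'_1,\ldots,c'_{n-1}}$, proving that $C$ is similar to $C'+D$. I do not anticipate any genuine obstacle: the construction is entirely explicit and case-free, and the only point meriting care is the basis check in the middle paragraph, which is immediate from the leading-term observation $p_i(C)f_1=C^if_1+(\text{lower powers of }C\text{ applied to }f_1)$.
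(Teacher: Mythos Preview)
The paper does not include its own proof of this lemma (it is quoted from \cite{Pojar-comp}), so there is no in-paper argument to compare against directly. Your construction is correct and, as a bonus, yields the upper-unitriangular transition matrix recorded in Remark~\ref{formP}; the basis-building device $f_{i+1}=(C-a_iI)f_i$ is exactly the trick the paper itself deploys in the proof of Lemma~\ref{mainLem}, so your approach is entirely in the spirit of the surrounding arguments.
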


\begin{rem}\label{formP}
By Lemma \ref{use1} for the companion matrix $C\in \M_n(\F)$ there exist a companion matrix $C'$ and an invertible matrix $P$ such that $C=P(\diag (\underbrace{a_1,\dots,a_k }_{k \textrm{-times}} ,0,\dots,0)+C')P^{-1}$. This matrix $P$ is upper triangular with all entries on the main diagonal being equal to $1.$
\end{rem}

The following Lemma is the $m$-nil-clean corespondent of Lemma $2$ from \cite{BrMe}.

\begin{lem}\label{mainLem}
Let $n\geq 3$ be a positive integer, $\F$ a field, $m\geq 2$ an integer, and $k\in \{1,2,\ldots,n-1\},$
such that $n-k$ is odd.
Let $C\in \mathcal{M}_n(\F)$ be a companion matrix. If $\mbox{trace}(C)=(\frac{n+k+1}{2}\cdot m-l)\cdot 1,$ for some $l\in \{1,2,\dots,mk-1\},$
 then $C$ is similar to a matrix $D$ which has a decomposition $D=E_1+E_2+\dots+E_m+V,$ such that $E_i^2=E_i,$ $i\in\{1,2,\dots,m\},$ $V^{k+1}=0,$ the first column of $V$ is $(0,0,\ldots,0),$ and the last line of $E_m$ is $(0,0,\ldots,0,1).$
\end{lem}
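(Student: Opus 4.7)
The plan is to follow the template of Lemma 2 of \cite{BrMe}, which is the $m = 1$ analogue of the present statement, and generalize its construction by distributing the ``rank deficit'' $l$ across the $m$ idempotents. By Lemma \ref{use1} together with Remark \ref{formP}, the companion matrix $C$ is similar to any upper Hessenberg matrix with unit subdiagonal whose first $k$ diagonal entries can be prescribed as integer multiples of unity, subject only to preserving the trace. I will use this flexibility to exhibit directly a matrix $D \sim C$ together with its decomposition $D = E_1 + \cdots + E_m + V$.

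Set $s = \frac{n+k+1}{2}$, an integer by the parity hypothesis. The hypothesis reads $\mathrm{trace}(C) = (sm - l) \cdot 1$ with $l \in \{1, \dots, mk - 1\}$, so the ranks $r_i$ of the idempotents must sum to $sm - l$. I will pick a partition $l = d_1 + \cdots + d_m$ with each $d_i \in \{0, 1, \dots, k\}$, which is available since $l \leq mk - 1$, and set $r_i := s - d_i \in [s-k, s] \subseteq [0, n]$. For $i < m$ I take $E_i$ to be essentially a diagonal idempotent of rank $r_i$, with its block of ones placed to align its support with rows $1$ through $r_i$; the idempotent $E_1$ is adjusted into a non-diagonal rank-$r_1$ idempotent carrying a $1$ in its $(2,1)$ entry, so that after summation the $(2,1)$-entry of $\sum E_i$ matches the unit subdiagonal of $D$. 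The idempotent $E_m$ is forced to have its top-left $(n-1) \times (n-1)$ block equal to an idempotent $E'_m$ of rank $r_m - 1$, its top-right column in $\ker E'_m$, its bottom-left entries of length $n-1$ equal to zero, and its $(n,n)$-entry equal to $1$; this makes the last row of $E_m$ equal to $(0, \dots, 0, 1)$ automatically.

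With these choices the diagonal of $D$ records the diagonal of $E_1 + \cdots + E_m$, and the remainder $V := D - \sum E_i$ is strictly upper triangular with vanishing first column. The remaining task is to verify $V^{k+1} = 0$; I plan to arrange that the nonzero entries of $V$ lie in a band of width at most $k$ above the main diagonal, so that $V$ is nilpotent of index at most $k + 1$. The partition $l = d_1 + \cdots + d_m$ will be chosen so that the nonzero $d_i$ are clustered to keep this band structure compatible; this is where the upper bound $l \leq mk - 1$ is used in full strength. The similarity $D \sim C$ will then follow from Lemma \ref{use1}, since $D$ is upper Hessenberg with unit subdiagonal and hence conjugate to any companion matrix having the same characteristic polynomial, which can be matched by adjusting the free top-row entries of $D$.

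The main obstacle I expect is the combinatorial bookkeeping: a single construction must simultaneously realize the correct characteristic polynomial for $D$, the zero first column of $V$, the normalized last row of $E_m$, and the nilpotence bound $V^{k+1} = 0$. The parity hypothesis on $n - k$ enters precisely to make the integer $s = (n+k+1)/2$ exist and to balance the band structure of $V$ above and below the prescribed normalization row and column. The jump from $m = 1$ to $m \geq 2$ introduces extra flexibility, but it also demands tracking which of the $E_i$'s contributes to each entry of $D$; the partition of $l$ across the $d_i$ is the new ingredient that keeps this bookkeeping manageable.
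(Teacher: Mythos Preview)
Your plan has a genuine gap in the control of the nilpotence index of $V$. You assert that $V = D - \sum E_i$ will be strictly upper triangular with its nonzero entries confined to a band of width at most $k$, but neither claim is compatible with your construction. The matrix $D$ produced by Lemma~\ref{use1} has the form $C' + \mathrm{diag}(a_1,\dots,a_{n-1},0)$ for a companion matrix $C'$: it carries a full unit subdiagonal and a last column $(-c'_0,\dots,-c'_{n-1})^T$ that is \emph{determined} by the characteristic polynomial of $C$ once the diagonal is fixed. Since your $E_i$ are diagonal (with only the single adjustment at position $(2,1)$), all the remaining subdiagonal $1$'s of $D$ survive into $V$, so $V$ is not upper triangular. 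More seriously, the forced last-column entries of $C'$ land in $V$ at distances up to $n-1$ from the diagonal, so no band structure is available. Your fallback of ``adjusting free top-row entries of $D$'' to match the characteristic polynomial has the same defect: once those entries are forced, they too sit in $V$ far from the diagonal. In short, the mechanism you propose for $V^{k+1}=0$ does not survive contact with the actual data of $C$.

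The paper's proof proceeds by a different route that you are missing. Instead of Lemma~\ref{use1}, it performs an explicit change of basis (built from alternating applications of $C$ and $C - mI$) that puts $C$ into a $2\times 2$ block form $\bigl(\begin{smallmatrix} M & N \\ P & Q \end{smallmatrix}\bigr)$ in which the lower-right $k\times k$ block $Q$ equals $mI_k$ minus a $k\times k$ companion matrix of trace exactly $l$. The hypothesis $l\in\{1,\dots,mk-1\}$ is then used, via Lemmas~3.2--3.3 of \cite{C} and Lemma~1 of \cite{BrMe}, to decompose this small companion matrix as a sum of $m-1$ idempotents and a nil-clean piece; your partition $l = d_1 + \cdots + d_m$ does not substitute for this step and in fact only needs $l\le mk$, so it does not explain the strict bound. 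The resulting nilpotent has block form $V=\bigl(\begin{smallmatrix} R & S \\ 0 & T \end{smallmatrix}\bigr)$ with $R^2=0$, $RS=0$, and $T$ a $k\times k$ nilpotent, whence $V^{k+1}=0$ by a direct block computation rather than any band argument. Your outline does not reach this reduction to the $k\times k$ subproblem, and without it the bound on the nilpotence index is unsupported.
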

\begin{proof}
Let $(e_1,e_2,\ldots,e_n)$ be the canonical basis of $\F^n$. We construct inductively a new basis $(f_1,f_2,\ldots,f_n)$ in the following way:
\\ $f_1=e_1;$
\\ for every even $i\in \{2,\ldots, n-k+1\},$ $f_i=Cf_{i-1}-mf_{i-1};$
\\ for every odd $i\in \{2,\ldots, n-k+1\},$ $f_i=Cf_{i-1};$
\\ for every $i\in \{n-k+2,\ldots,n\},$ $f_i=mf_{i-1}-Cf_{i-1};$

It can be proved that for every $j\in \{1,2,\ldots,n\}$  we have $\textrm{Lin}(\{e_1,\ldots,e_j\})=\textrm{Lin}(\{f_1,\ldots,f_j\}),$ hence $\{f_1,\ldots,f_n\}$ is a basis for $\F^n.$
The matrix associated to $C$ in the basis $\{f_1,\ldots,f_n\}$ has the form $$D=\left(\begin{array}{cc}
M & N \\
P & Q
\end{array}\right),$$
with $$M=\left(\begin{array}{cccccccc}
m  & 0 & 0 & 0 & \dots & 0 & 0 & 0 \\
1  & 0 & 0 & 0 & \dots & 0 & 0 & 0 \\
0  & 1 & m & 0 & \dots & 0 & 0 & 0 \\
0  & 0 & 1 & 0 & \dots & 0 & 0 & 0 \\
\vdots & \vdots & \vdots & \vdots & \ddots & \vdots & \vdots & \vdots \\
0  & 0 & 0 & 0 & \dots & m & 0 & 0 \\
0  & 0 & 0 & 0 & \dots & 1 & 0 & 0 \\
0  & 0 & 0 & 0 & \dots & 0 & 1 & m
\end{array}\right)\in \mathcal{M}_{n-k}(\F),$$

$$N=\left(\begin{array}{ccccc}
 0 & 0 & \dots & 0 & d_1 \\
 0 & 0 & \dots & 0 & d_2 \\
 \vdots & \vdots & \ddots & \vdots & \vdots \\
 0 & 0 & \dots & 0 & d_{n-k-1} \\
 0 & 0 & \dots & 0 & d_{n-k}
\end{array}\right)\in \mathcal{M}_{n-k,k}(\F),$$

$$P=\left(\begin{array}{ccccc}
 0 & 0 & \dots & 0 & 1 \\
 0 & 0 & \dots & 0 & 0 \\
 \vdots & \vdots & \ddots & \vdots & \vdots \\
 0 & 0 & \dots & 0 & 0 \\
 0 & 0 & \dots & 0 & 0
\end{array}\right)\in \mathcal{M}_{k,n-k}(\F),$$

and $Q=mI_k-C_{d_{n-k+1},\ldots,d_n}$ for some $d_{n-k+1},\ldots,d_n\in \F.$ Moreover from $$\mbox{trace}(M)+\mbox{trace}(Q)=\mbox{trace}(C),$$ it follows that
$$\frac{n-k+1}{2}\cdot m+km+d_{n}=(\frac{n+k+1}{2}\cdot m-l)\cdot 1,$$ hence
$$\frac{n+k+1}{2}\cdot m+d_{n}=(\frac{n+k+1}{2}\cdot m-l)\cdot 1,$$ therefore
$$-d_{n}=l\in \{1,2,\dots,mk-1\}.$$

By the proofs of Lemmas 3.2 and 3.3 from  \cite{C}, it follows that there exist $r_i\in \{1,\dots,n\},$ and the idempotents $F_i$ such that
$$C_{d_{n-k+1},\ldots,d_n}=F_1+\dots+F_{m-1}+B,$$
where $F_i=P_i\cdot \dots \cdot P_1\cdot\diag (\underbrace{1,\dots,1 }_{r_i \textrm{-times}} ,0,\dots,0)\cdot (P_i\cdot \dots \cdot P_1)^{-1},$ $B=P_{m-1}\cdot \dots \cdot P_1\cdot C'(P_{m-1}\cdot \dots \cdot P_1)^{-1},$
$C'$ is a $k\times k$ companion matrix, with $l_1=\Tr(C')\in\{1,\dots,k-1\},$ and $P_i$ is a matrix of the form of $P$ from Remark \ref{formP}.
Since $l_1=\Tr(C')\in\{1,\dots,k-1\},$ we derive by Lemma 1 from \cite{BrMe} that there exists $L_{12}$ and $T_1$ a nilpotent such that $C'=\left(\begin{array}{cc}
I_{l_1}  & L_{12} \\
0 & 0
\end{array}\right)+T_1.$
By Remark \ref{formP} all $P_i's $ are upper triangular with only $1's$ on the main diagonal, therefore there exists $L'_{12}$ such that
$$B=\left(\begin{array}{cc}
I_{l_1}  & L'_{12} \\
0 & 0
\end{array}\right)+(P_{m-1}\cdot \dots \cdot P_1)^{-1}\cdot T_1P_{m-1}\cdot \dots \cdot P_1.$$
Let $T$ be the nilpotent matrix $T=-(P_{m-1}\cdot \dots \cdot P_1)^{-1}\cdot T_1P_{m-1}\cdot \dots \cdot P_1$
We consider $$V=\left(\begin{array}{cc}
R  & S \\
0 & T
\end{array}\right),$$
with
$$R=\left(\begin{array}{ccccccc}
0  & 0 & 0 & 0 & \dots & 0 & 0   \\
0  & 0 & 0 & 0 & \dots & 0 & 0   \\
0  & 1 & 0 & 0 & \dots & 0 & 0   \\
0  & 0 & 0 & 0 & \dots & 0 & 0  \\
\vdots & \vdots & \vdots &  \vdots & \ddots & \vdots & \vdots \\
0  & 0 & 0 & 0 & \dots & 0 & 0 \\
0  & 0 & 0 & 0 & \dots & 1 & 0
\end{array}\right)\in \mathcal{M}_{n-k}(\F),$$
(where $0$ and $1$ alternate on the diagonal below the main one),

$$S=\left(\begin{array}{ccccc}
 0 & 0 & \dots & 0 & d_1 \\
 0 & 0 & \dots & 0 & 0 \\
 0 & 0 & \dots & 0 & d_3 \\
 0 & 0 & \dots & 0 & 0 \\
 \vdots & \vdots & \ddots & \vdots & \vdots \\
 0 & 0 & \dots & 0 & d_{n-k}
\end{array}\right)\in \mathcal{M}_{n-k,k}(\F),$$
Since $RS=0$ it is easy to see that $V^{k+1}=\left(\begin{array}{cc}
R^{k+1}  & ST^k \\
O & T^{k+1}
\end{array}\right).$ But $T$ is a $k$ by $k$ nilpotent, and therefore $V^{k+1}=0.$ Note that the first column of $V$ is $(0,\dots,0).$
In order to see that $D-V$ is sum of $m$ idempotents we observe that it has the form
$$D-V=\left(\begin{array}{cc}
X  & Y \\
Z & K
\end{array}\right),$$
with $$X=\left(\begin{array}{cccccccc}
m  & 0 & 0 & 0 & \dots & 0 & 0 & 0 \\
1  & 0 & 0 & 0 & \dots & 0 & 0 & 0 \\
0  & 0 & m & 0 & \dots & 0 & 0 & 0 \\
0  & 0 & 1 & 0 & \dots & 0 & 0 & 0 \\
\vdots & \vdots & \vdots & \vdots & \ddots & \vdots & \vdots & \vdots \\
0  & 0 & 0 & 0 & \dots & m & 0 & 0 \\
0  & 0 & 0 & 0 & \dots & 1 & 0 & 0 \\
0  & 0 & 0 & 0 & \dots & 0 & 0 & m
\end{array}\right)\in \mathcal{M}_{n-k}(\F),$$
$$Y=\left(\begin{array}{cccccc}
 0 & 0 & 0 & \dots & 0 & 0 \\
 0 & 0 & 0 & \dots & 0 & d_2 \\
 0 & 0 & 0 & \dots & 0 & 0 \\
 0 & 0 & 0 & \dots & 0 & d_4\\
 \vdots & \vdots & \vdots & \ddots & \vdots & \vdots \\
 0 & 0 & 0 & \dots & 0 & d_{n-k-1} \\
 0 & 0 & 0 & 0 & 0 & 0
\end{array}\right)\in \mathcal{M}_{n-k,k}(\F),$$
$$Z=\left(\begin{array}{ccccc}
 0 & 0 & \dots & 0 & 1 \\
 0 & 0 & \dots & 0 & 0 \\
 \vdots & \vdots & \ddots & \vdots & \vdots \\
 0 & 0 & \dots & 0 & 0 \\
 0 & 0 & \dots & 0 & 0
\end{array}\right)\in \mathcal{M}_{k,n-k}(\F),$$
and $$K=Q-T=mI_k-C_{d_{n-k+1},\dots,d_n}-T=$$ $$mI_k-(F_1+F_2+\dots+F_{m-1}+\left(\begin{array}{cc}
I_{l_1}  & L'_{12} \\
0 & 0
\end{array}\right)-T)-T=$$$$I_k-F_1+\dots+I_k-F_{m-1}+\left(\begin{array}{cc}
0  & -L'_{12} \\
0 & I_{l_1-k}
\end{array}\right)\in \mathcal{M}_{k}(\F).$$
Take $$E_i=\begin{pmatrix}
  \begin{array}{cccccc}
    1 & 0 & 0 & \dots & 0 & 0 \\
    0 & 0 & 0 & \dots & 0 & 0 \\
    0 & 0 & 1 & \dots & 0 & 0 \\
    \vdots & \vdots & \vdots & \vdots & \vdots & \vdots\\
    0 & 0 & 0 & \dots & 0 & 0 \\
    0 & 0 & 0 & \dots & 0 & 1 \
  \end{array}
  & \rvline &
   \begin{matrix}
   \large{0}
  \end{matrix}
   \\
\hline
   \begin{matrix}
   \large{0}
  \end{matrix}
  & \rvline &
  \begin{matrix}
   \large{I_k-F_i}
  \end{matrix}
\end{pmatrix}
,$$ for $i\in\{1,\dots,m-1\},$
and $$E_m=\begin{pmatrix}
  \begin{array}{ccccccc}
    1 & 0 & 0 & \dots & 0 & 0 & 0\\
    1 & 0 & 0 & \dots & 0 & 0 & 0\\
    0 & 0 & 1 & \dots & 0 & 0 & 0\\
    \vdots & \vdots & \vdots & \vdots & \vdots & \vdots\\
    0 & 0 & 0 & \dots & 1 & 0 & 0\\
    0 & 0 & 0 & \dots & 1 & 0 & 0\\
    0 & 0 & 0 & \dots & 0 & 0 & 1
  \end{array}
  & \rvline &
  \begin{array}{cccc}
    0  & \dots & 0 & 0 \\
    0  & \dots & 0 & d_2 \\
    0  & \dots & 0 & 0 \\
    \vdots & \vdots & \vdots & \vdots\\
    0  & \dots & 0 & 0 \\
    0  & \dots & 0 & d_{n-k-1} \\
    0  & \dots & 0 & 0
\end{array}
   \\
\hline
  \begin{array}{ccccccc}
    0 & 0 & 0  & \dots & 0 & 0 & 1 \\
    \vdots & \vdots & \vdots & \vdots & \vdots & \vdots\\
    0 & 0 & 0  & \dots & 0 & 0 & 0
  \end{array}
  & \rvline &
  \begin{array}{cc}
   0 & -L'_{12} \\
   0 &  I_{k-l_1}
  \end{array}
\end{pmatrix}
$$ has the last line $(0,\dots,0,1)$ and is of the form of the idempotent $D-V$ from Lemma $2$ in \cite{BrMe}.
For us $$D-V=E_1+\dots+E_m,$$ and in conclusion the Lemma is proved.
\end{proof}

\begin{rem}\label{formQ}
The transition matrix from the canonical basis of $F^n,$ to the basis constructed in Lemma \ref{mainLem} is upper triangular, with each entry on the main diagonal being equal to $1,$ or $-1.$
\end{rem}

\begin{lem}\label{k=1}
Let $n$ be a positive integer and $m\geq 3$ an integer. Suppose $\F$ is a field and $C\in \M_n(\F)$ is a companion matrix, with $\Tr(C)=\frac{n}{2}\cdot(m-1)+l,$ for some integer $l\in \{1,\dots,m-1\}.$ Then $C$ is similar to a matrix $D$ which has a decomposition $D=E_1+E_2+\dots+E_m+V,$ such that $E_i^2=E_i,$ $i\in\{1,2,\dots,m\},$ $V^{2}=0,$ the first column of $V$ is $(0,0,\ldots,0),$ and the last line of $E_m$ is $(0,0,\ldots,0,1).$
\end{lem}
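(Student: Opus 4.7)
The approach parallels the proof of \lemref{mainLem} with $m$ replaced by $m-1$ in the basis recurrence, in order to match the new trace hypothesis $\Tr(C)=\tfrac{n}{2}(m-1)+l$. I build a basis $(f_1,\ldots,f_n)$ of $\F^n$ via $f_1=e_1$, $f_i=Cf_{i-1}-(m-1)f_{i-1}$ when $i$ is even, and $f_i=Cf_{i-1}$ when $i$ is odd. The same nested-span induction used in \lemref{mainLem} shows that $(f_1,\ldots,f_n)$ is a basis, and in this basis the matrix $D$ of $C$ has $(m-1)$ at every odd diagonal position, $0$ at every even diagonal position other than $(n,n)$, $1$'s all along the subdiagonal, and a last column $(d_1,\ldots,d_{n-1},D_{n,n})^{T}$ whose trailing entry is forced by trace matching to equal $l$ when $n$ is even (and $\tfrac{m-1}{2}+l$ when $n$ is odd).

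I take $V$ with exactly the alternating shape of \lemref{mainLem}: $1$'s at the subdiagonal positions $(3,2),(5,4),\ldots,(n-1,n-2)$, i.e.\ stopping before $(n,n-1)$, together with the odd-indexed entries $d_j$ of the last column of $D$ copied into the corresponding rows of $V$'s last column, and $V_{n,n}=0$. The first column of $V$ is visibly zero, and $V^{2}=0$ by the same computation as in \lemref{mainLem}: one application of $V$ sends each basis vector either to $0$ or into the span of $\{e_1,e_3,\ldots,e_{n-1}\}$, on which $V$ vanishes. After subtraction, $D-V$ has $(m-1)$ at each odd diagonal position, $l$ at $(n,n)$, $1$'s at the subdiagonal positions $(2,1),(4,3),\ldots,(n,n-1)$, and the even-indexed $d_{2j}$'s in its last column.

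It remains to decompose $D-V=E_{1}+\cdots+E_{m}$ as a sum of idempotents with the last row of $E_m$ equal to $(0,\ldots,0,1)$. Define $E_{m}$ to have the doubled-column shape of \lemref{mainLem} in its first $n-1$ rows (entries $1$ at $(j,j)$ and $(j+1,j)$ for odd $j\le n-3$, plus a single $1$ at $(n-1,n-1)$), the even-indexed $d_{2j}$'s placed in its last column, a $1$ at $(n,n)$, and zero in the rest of the last row; an entrywise verification modelled on Lemma~$2$ of \cite{BrMe} shows that $E_{m}^{2}=E_{m}$. Define $E_{m-1}$ as the idempotent whose only nonzero entries are $1$'s at $(n-1,n-1)$ and $(n,n-1)$, which absorbs the subdiagonal $1$ at $(n,n-1)$ deliberately left out of $E_{m}$. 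The residue $D-V-E_{m}-E_{m-1}$ is then a diagonal matrix whose entries lie in $\{0,\,m-2,\,m-3,\,l-1\}$, and the assumptions $m\ge 3$ and $1\le l\le m-1$ guarantee that one can write it as a sum of $m-2$ diagonal $0/1$ idempotents $E_{1},\ldots,E_{m-2}$; the hypothesis $m\ge 3$ is used precisely here, because it is exactly what makes the required count $m-3$ of ones at position $(n-1,n-1)$ nonnegative.

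The step I expect to be most delicate is the parity split: when $n$ is odd the entry $D_{n,n}=\tfrac{m-1}{2}+l$ differs from $l$, so either the doubled-column pattern of $E_{m}$ has to be extended one further step or $E_{m-1}$ has to be modified to carry an additional $1$ at $(n,n)$, and the counts of ones at each diagonal position of the residue must be re-verified inside $\F$. The overall strategy, however, is the one above, and the role of $m\ge 3$ is unchanged.
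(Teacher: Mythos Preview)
Your approach is essentially the same as the paper's: both obtain the matrix $D$ with diagonal $(m-1,0,m-1,0,\ldots,m-1,-d_{n-1})$ and subdiagonal of $1$'s (the paper invokes \lemref{use1} rather than rebuilding the basis, but the outcome is identical), then split $D$ into the same pieces --- the alternating nilpotent $V$, the doubled-column idempotent $E_m$, the rank-one idempotent $E_{m-1}$ carrying the $(n,n-1)$ entry, and diagonal $0/1$ idempotents $E_1,\ldots,E_{m-2}$ absorbing the residue $(m-2,0,\ldots,m-2,0,m-3,l-1)$.

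Your worry about a parity split is unnecessary: the trace hypothesis $\Tr(C)=\tfrac{n}{2}(m-1)+l$ with $l$ an integer already forces $n$ to be even in the cases where the lemma is applied (it is invoked in the proof of \thmref{mainThm} only when $k=1$ and $n-k$ is odd), and the paper's proof is written for even $n$ throughout.
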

\begin{proof}
By Lemma \ref{use1}, $C$ is similar with a matrix of the form
$$D=\left(\begin{array}{ccccccc}
    m-1 & 0 & 0 & \dots & 0 & 0 & -d_0\\
    1 & 0 & 0 & \dots & 0 & 0 & -d_1\\
    0 & 1 & m-1 & \dots & 0 & 0 & -d_2\\
    0 & 0 & 1  & \dots & 0 & 0 & -d_3\\
    \vdots & \vdots & \vdots & \vdots & \vdots & \vdots & \vdots\\
    0 & 0 & 0 & \dots & 0 & 0 & -d_{n-3}\\
    0 & 0 & 0 & \dots & 1 & m-1 & -d_{n-2}\\
    0 & 0 & 0 & \dots & 0 & 1 & -d_{n-1}
  \end{array}\right)$$
Since $\Tr(D)=\Tr(C),$ we derive that $-d_{n-1}=l\in\{1,\dots,m-1\}.$ It follows that there exist $e_{i}\in\{0,1\}$, for every $i\in \{1,\dots,m-2\}$ such that $$-d_{n-1}=e_1+\dots+e_{m-2}+0+1.$$
\\ Now consider the following $n\times n$ matrices
\begin{itemize}
\item the idempotents $E_i=\diag(1,0,1,\dots,0,1,e_i),$ for every $i\in \{1,\dots,m-3\};$
\item the idempotent $E_{m-2}=\diag(1,0,1,\dots,1,0,0,e_{m-2});$
\item the idempotent $E_{m-1}=\begin{pmatrix}
  \begin{matrix}
    \large{0}
  \end{matrix}
  & \rvline &
 \begin{matrix}
   \large{0}
 \end{matrix}
 \\
\hline
  \begin{matrix}
   \large{0}
  \end{matrix}
  & \rvline &
  \begin{array}{cc}
   1 & 0 \\
   1 & 0
  \end{array}
\end{pmatrix}$

\item the matrix $$E_m=\begin{pmatrix}
  \begin{array}{ccccccc}
    1 & 0 & 0 & \dots & 0 & 0 & 0\\
    1 & 0 & 0 & \dots & 0 & 0 & 0\\
    0 & 0 & 1 & \dots & 0 & 0 & 0\\
    \vdots & \vdots & \vdots & \vdots & \vdots & \vdots\\
    0 & 0 & 0 & \dots & 1 & 0 & 0\\
    0 & 0 & 0 & \dots & 1 & 0 & 0\\
    0 & 0 & 0 & \dots & 0 & 0 & 1
  \end{array}
  & \rvline &
  \begin{array}{c}
    0 \\
    -d_1 \\
    0 \\
    -d_2 \\
    \vdots \\
    -d_{n-3} \\
    0
\end{array}
   \\
\hline
  \begin{array}{ccccccc}
    0 & 0 & 0  & \dots & 0 & 0 & 0
  \end{array}
  & \rvline &
  \begin{array}{c}
   1
  \end{array}
\end{pmatrix} =\left(\begin{array}{cc}
                                          X & Y \\
                                          0 & 1 \end{array}\right),$$
      which is idempotent since $X$ is idempotent, and $XY=0;$ moreover it has last line $(0,\dots,0,1);$
\item the matrix $$V=\begin{pmatrix}
  \begin{array}{ccccccc}
    0 & 0 & 0 & \dots & 0 & 0 & 0\\
    0 & 0 & 0 & \dots & 0 & 0 & 0\\
    0 & 1 & 0 & \dots & 0 & 0 & 0\\
    \vdots & \vdots & \vdots & \vdots & \vdots & \vdots\\
    0 & 0 & 0 & \dots & 0 & 0 & 0\\
    0 & 0 & 0 & \dots & 0 & 0 & 0\\
    0 & 0 & 0 & \dots & 0 & 1 & 0
  \end{array}
  & \rvline &
  \begin{array}{c}
    -d_0 \\
    0 \\
    -d_2 \\
    0 \\
    \vdots \\
    0 \\
    -d_{n-1}
\end{array}
   \\
\hline
  \begin{array}{ccccccc}
    0 & 0 & 0  & \dots & 0 & 0 & 0
  \end{array}
  & \rvline &
  \begin{array}{c}
   0
  \end{array}
\end{pmatrix} =\left(\begin{array}{cc}
                                          R & S \\
                                          0 & 0 \end{array}\right),$$
      which is such that $V^2=0,$ since $R^2=0$, and $RS=0;$ moreover it has the first column $(0,\dots,0).$
\end{itemize}
We have the desired decomposition $$D=E_1+E_2+\dots+E_m+V,$$ and the conclusion is proved.
\end{proof}

\section{$m$-nil-clean nonderogatory matrices}

\begin{thm}\label{mainThm}
Let $\F$ be a field of odd characteristic $p.$ Let $m\geq 2,$ and $n$ be positive integers, such that $mn-1\geq p\geq n.$ Suppose $A\in \M_n(\F)$ is a nonderogatory matrix such that $\Tr(A)=t\cdot 1,$ $t\in \{0,\dots,p-1\}.$ There exist the idempotents $E_i,$ for every $i\in \{1,\dots,m\},$ and the nilpotent $V,$ such that $A=E_1+\dots+E_m+V,$ and $V^{[\frac{p-2}{m}]+2}=0.$
\end{thm}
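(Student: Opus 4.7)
The plan is to reduce to the case where $A$ is a companion matrix $C$ and then combine Lemma \ref{mainLem} with Lemma \ref{k=1} by a case distinction on the trace. Since $A$ is nonderogatory, it is similar to some companion matrix $C$, and any $m$-nil-clean decomposition of $C$ transports back to $A$ by conjugation, preserving the idempotent relations and the nilpotence index of $V$. It therefore suffices to produce, for every companion matrix $C\in\M_n(\F)$ with $\Tr(C)=t\cdot 1$, $t\in\{0,\ldots,p-1\}$, a decomposition $C=E_1+\cdots+E_m+V$ with $E_i^2=E_i$ and $V^{k^{*}+1}=0$, where $k^{*}:=[\tfrac{p-2}{m}]+1$.

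Lemma \ref{mainLem} produces, for each $k\in\{1,\ldots,n-1\}$ with $n-k$ odd, a decomposition with $V^{k+1}=0$ whenever $\Tr(C)=\bigl(\tfrac{n+k+1}{2}m-l\bigr)\cdot 1$ for some $l\in\{1,\ldots,mk-1\}$; this corresponds to a window of $mk-1$ consecutive residues modulo $p$, and a brief calculation shows that, for fixed parity of $k$, these windows are all centred at the common value $\tfrac{(n+1)m}{2}\pmod p$ and nested. Hence for the largest admissible $k\le k^{*}$ (of parity opposite to $n$) the window is already of width $mk-1\ge p-m-1$, and since $mk^{*}\ge p-1$ it is at most one or two residues short of the full set $\{0,\ldots,p-1\}$; smaller admissible $k$'s yield only sub-windows and cannot enlarge the coverage.

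The main obstacle will be covering the handful of trace values that fall outside this window. For $m\ge 3$ I would appeal to Lemma \ref{k=1}, which supplies a disjoint window of $m-1$ residues of the form $\tfrac{n(m-1)}{2}+l$ with $V^{2}=0$; the key step there is the modular arithmetic check that this window meets the missing residues. For $m=2$, Lemma \ref{k=1} is unavailable, but a short computation shows the missing traces lie precisely in $\{1,\ldots,m\}=\{1,2\}$, the range for which Theorem \ref{teoremaMNilClean1}(2) already establishes $t$-nil-cleanness. For such $C$ I would adapt the construction in Lemma \ref{mainLem} directly: apply Lemma \ref{use1} and Remark \ref{formP} to put $C$ in the form $\diag(a_1,\ldots,a_k,0,\ldots,0)+C'$ with an appropriate small $k$, and then read off $m$ idempotents plus a nilpotent of index $\le k^{*}+1$ by the same block-and-basis scheme, relying on the fact that only two small idempotent ranks need to be distributed.

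Assembling the cases, for each $t\in\{0,\ldots,p-1\}$ either Lemma \ref{mainLem}, Lemma \ref{k=1}, or the boundary construction furnishes a decomposition with $V^{k+1}=0$ for some $k\le k^{*}$, and hence $V^{k^{*}+1}=0$, finishing the proof. The delicate part I expect to wrestle with is the covering check, particularly verifying that Lemma \ref{k=1}'s window hits the missing residues in every subcase for $m\ge 3$, and that the direct construction for $m=2$ really keeps the nilpotence index within $k^{*}+1$.
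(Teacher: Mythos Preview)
Your plan has two genuine gaps, and the paper fills them with ideas you have not anticipated.

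\textbf{Parity and window size.} When $n-k^{*}$ is even you are forced to use $k=k^{*}-1$ in Lemma~\ref{mainLem}, and then the window has width $m(k^{*}-1)-1\ge p-m-2$, so up to $m+2$ residues are missed, not ``one or two''. Lemma~\ref{k=1} supplies only $m-1$ residues, so your covering argument cannot close. The paper never drops to $k^{*}-1$; instead, for $n-k^{*}$ even it uses a \emph{bordering} construction: write $C$ as a $1\times 1$ block bordered by an $(n-1)\times(n-1)$ matrix $D$ with $ (n-1)-k^{*}$ odd, apply the odd case to $D$, and then lift the decomposition back to size $n$. This is precisely why Lemmas~\ref{mainLem} and~\ref{k=1} record the seemingly idle conditions ``first column of $V$ is zero'' and ``last row of $E_m$ is $(0,\dots,0,1)$''; they are exactly what makes the border extension produce an idempotent $E_m'$ and a nilpotent $W$ with $W^{k^{*}+1}=0$. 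You do not use these conditions anywhere.

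\textbf{The missing residues.} Even in the odd-parity case with $k=k^{*}$ and $mk^{*}-1=p-2$, your plan to patch the two missing traces via Lemma~\ref{k=1} fails because that lemma, as proved, requires $n$ even (look at the alternating diagonal in its proof), whereas here $n$ can be odd. The paper's device is different and works uniformly: use Lemma~\ref{use1} to write $C\sim \mathrm{diag}(1,\dots,1,0,\dots,0)+C'(r)$ for a suitable rank $r$, which shifts the trace so that $C'(r)$ lands in the window of Lemma~\ref{mainLem} (or Lemma~\ref{k=1}) \emph{with $m-1$ in place of $m$}. One then gets an $(m-1)$-nil-clean decomposition of $C'(r)$ and adds the diagonal idempotent back as the $m$-th idempotent. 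The upper-triangular shape of the transition matrices (Remarks~\ref{formP} and~\ref{formQ}) is used to preserve the first-column/last-row properties through conjugation, so that the bordering step still applies. Your $m=2$ paragraph is a hope rather than a construction, and Theorem~\ref{teoremaMNilClean1}(2) gives no control on the nilpotence index.
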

\begin{proof}
We can consider only decomposition for $C,$ a companion matrix such that $A\sim C,$ since a matrix similar to an idempotent is an idempotent, and a matrix similar to a nilpotent is nilpotent.
Let $k=[\frac{p-2}{m}]+1.$ Then $k>\frac{p-2}{m},$ and hence $km-1\geq p-2.$
\begin{itemize}
   \item $k\geq n-1;$ $C$ is $m$-nil clean with the involved nilpotent $V,$ such that $V^n=0,$ therefore $V^{k+1}=0.$
   \item $k\leq n-2;$
         \begin{itemize}
         \item $n-k$ is odd; For further use we are going to prove more, there exists $D$ such that $C\sim D,$ and $D$ has an $m$-nil clean decomposition, with properties like in Lemma \ref{mainLem}.
               \begin{itemize}
               \item $k=1;$ we have $m-1\geq p-2,$ and also $n>3,$ and therefore $p\geq 5.$
                     \begin{itemize}
                     \item $m-1\geq p;$ then $\Tr(C)=\frac{n}{2}\cdot(m-1)+l,$ for some $l\in \{1,\dots,m-1\}.$ By Lemma \ref{k=1}, we get that $C$ has a desired $m$-nil-clean decomposition.
                     \item $m-1 \in \{p-1,p-2\};$ that is $m\in \{p,p-1\};$ hence $m\geq 4.$

                           By Lemma \ref{use1}, for every $r\in \{1,\dots,n\},$ there exists a companion matrix $C'(r),$ such that $$C\sim \diag (\underbrace{1,\dots,1 }_{r \textrm{-times}} ,0,\dots,0) +C'(r).$$ Hence
                         $$\Tr(C'(r))=(\frac{n}{2}\cdot (m-1)+(l-r))\cdot 1=$$$$(\frac{n}{2}\cdot(m-2)+(l-r+\frac{n}{2})\cdot 1.$$
                         Let $$l'(r)=(l-r+\frac{n}{2})\cdot1.$$
                         Assume that for every $r\in \{1,\dots, n\},$ we have $l'(r)\in \{p-2,p-1,p\}.$ Note that since $n\leq p,$ we derive that $l'(r_1)=l'(r_2),$ if and only if $r_1=r_2.$ Therefore $n\leq 3,$ contradiction.

                          Hence there exists $r\in \{1,\dots,n\},$ such that $l'(r)\in \{1,\dots,p-3\}.$ Consequently $l'(r)\in \{1,\dots,m-2\}.$ By Lemma \ref{k=1}, $C'(r)\sim D'$, and $D'$ has a desired $(m-1)$-nil-clean decomposition. Moreover $C'=PD'P^{-1},$ and we know by Remark \ref{formP} that $P$ is upper triangular, with main diagonal $(1,\dots,1,1)$. Therefore the product of $P$, an idempotent with last line $(0,\dots,0,1),$ and $P^{-1}$ has last line $(0,\dots,0,1);$ also the product of $P$, a nilpotent with first column $(0,\dots,0),$ and $P^{-1}$ has first column $(0,\dots,0).$ Hence, since $C\sim \diag (\underbrace{1,\dots,1 }_{r \textrm{-times}} ,0,\dots,0)+PD'P^{-1},$ we obtain that $C$ has a desired $m$-nil-clean decomposition.

                     \end{itemize}
               \item $k\geq 2;$ we have the following situations:
                     \begin{itemize}
                     \item $km-1\geq p;$ then $\Tr(C)=(\frac{n+k+1}{2}\cdot m-l)\cdot 1,$ for some $l\in \{1,\dots,km-1\}.$ Therefore by Lemma \ref{mainLem}, $C$ has an $m$-nil-clean desired decomposition.
                     \item $km-1=p-1;$ then $km=p,$ and since $k\geq 2,$ it follows the contradiction $m=1.$
                     \item $km-1=p-2;$ then $km=p-1.$ By Lemma \ref{use1}, for every $r\in \{1,\dots,n\},$ there exists a companion matrix $C'(r),$ such that $$C\sim \diag (\underbrace{1,\dots,1 }_{r \textrm{-times}} ,0,\dots,0) +C'(r).$$ Hence
                         $$\Tr(C'(r))=(\frac{n+k+1}{2}\cdot m-(l+r))\cdot 1=$$$$(\frac{n+k+1}{2}\cdot(m-1)-(l+r-\frac{n+k+1}{2}))\cdot 1.$$
                         Let $$l'(r)=(l+r-\frac{n+k+1}{2})\cdot1.$$
                         Assume that for every $r\in \{1,\dots, n\},$ we have $l'(r)\in \{k(m-1),\dots,p\}.$ Note that since $n\leq p,$ we derive that $l'(r_1)=l'(r_2),$ if and only if $r_1=r_2.$ Therefore $n\leq p-(k(m-1)-1)=p-(p-1-k-1)=k+2\leq n$. Consequently $k+2=n,$ and the contradiction $n-k$ is even.

                         Therefore there exists $r\in \{1,\dots,n\},$ such that $l'(r)\in \{1,\dots,k(m-1)-1\}.$ By Lemma \ref{mainLem}, $C'(r)\sim D'$, and $D'$ has a desired $(m-1)$-nil-clean decomposition. Moreover $C'(r)=QD'Q^{-1},$ and we know by Remark \ref{formQ} that $Q$ is upper triangular, with each entry on main diagonal being equal to $1$ or $-1.$ Therefore the product of $Q$, an idempotent with last line $(0,\dots,0,1),$ and $Q^{-1}$ has last line $(0,\dots,0,1);$ also the product of $Q$, a nilpotent with first column $(0,\dots,0),$ and $Q^{-1}$ has first column $(0,\dots,0)$ Therefore, since $C\sim \diag (\underbrace{1,\dots,1 }_{r \textrm{-times}} ,0,\dots,0)+QDQ^{-1},$ we obtain that $C$ has a desired $m$-nil-clean decomposition.

                     \end{itemize}
               \end{itemize}
         \item $n-k$ is even
           We are going to use a technique similar with the one used for nil-clean decompositions in the proof of the main theorem from \cite{BrMe},
      obtaining a decomposition as sum of $m$ idempotents and a nilpotent with nilpotence index at most $k+1$, for the companion matrix $C\in \mathcal{M}_n(\F),$  in the following way:
\end{itemize}
     we know that $C$ is similar to a matrix of the form

\[D'=
\begin{pmatrix}
  \begin{array}{c}
    0
  \end{array}
  & \rvline &
  \begin{array}{cccc}
    0  & \dots & 0 & d_1
\end{array}
   \\
\hline
  \begin{array}{c}
    1 \\
    0 \\
    \vdots\\
    0
  \end{array}
  & \rvline &
  \begin{matrix}
   \large{D}
  \end{matrix}
\end{pmatrix}
\]
where $D\in \mathcal{M}_{3}(\F)$ is the nonderogatory matrix $D$ we have found in previous case, where we proved that there exists an $m$-nil clean   decomposition for $D=E_1+\dots+E_m+V,$ last line of $E_m$ is $(0,\dots,0,1),$ first column of $V$ is $(0,\dots,0),$ and $V^{k+1}=0.$
 Then since $k\leq n-1,$ $D'=E_1'+\dots+E_m'+W,$ with $E_i'=\left(\begin{array}{cc}
                                                             0 & 0 \\
                                                             0 & E_i\end{array}\right)$, for every $i\in \{1,2,\dots,m-1\},$
$E'_m=
\begin{pmatrix}
  \begin{array}{c}
    0
  \end{array}
  & \rvline &
  \begin{array}{cccc}
    0  & \dots & 0 & d_1
\end{array}
   \\
\hline
  \begin{array}{c}
    0 \\
    0 \\
    \vdots\\
    0
  \end{array}
  & \rvline &
  \begin{matrix}
   \large{E_m}
  \end{matrix}
\end{pmatrix}
,$ and
\[W=
\begin{pmatrix}
  \begin{array}{c}
    0
  \end{array}
  & \rvline &
 \begin{matrix}
   0 & \dots & 0 & 0
 \end{matrix}
 \\
\hline
  \begin{array}{c}
     1 \\
     0 \\
     \vdots\\
     0 \\
  \end{array}
  & \rvline &
  \begin{matrix}
   \large{V}
  \end{matrix}
\end{pmatrix}
\]
Since last line of $E_m$ is $(0,\dots,0,1),$ we derive that $E'_m$ is idempotent.
Since first column of $V$ is $(0,\dots,0)$ we have $W^{k+1}=0$. Therefore since $D'\sim C$, a matrix similar to an idempotent is a idempotent, and a matrix similar to a nilpotent is nilpotent, we have found a decomposition as sum of $m$ idempotents and a nilpotent with nilpotence index at most $k+1$ for $C.$
\end{itemize}
\end{proof}

The following Proposition proves that in general we cannot decrease the bound
of the nilpotence degrees of the nilpotent matrices which can be used to obtain
decompositions as in Theorem \ref{mainThm}.

\begin{prop}
Let $\F$ a field of odd characteristic $p,$ and $n=[\frac{p-2}{2}]+2.$ Suppose $C\in \M_n(\F)$ is a companion matrix, with $\Tr(C)=(2n-1)\cdot 1,$ which does not have $0$ or $1$ as eigenvalue. Assume $C=E_1+E_2+V,$ such that $E_1^2=E_1,$ $E_2^2=E_2,$ and $V$ is nilpotent. Then $V^{[\frac{p-2}{2}]+1}\neq 0.$
\end{prop}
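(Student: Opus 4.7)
The plan is to argue by contradiction: set $k=[\frac{p-2}{2}]+1=n-1$ and assume $V^{n-1}=0$, aiming for an impossibility. Since $p$ is odd and $n=[\frac{p-2}{2}]+2=\frac{p+1}{2}$, one has $2n-1=p$, so the trace hypothesis reads $\Tr(C)=0$ in $\F$.

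First I would pin down the idempotents. From $\Tr(E_1)+\Tr(E_2)=\Tr(C)=0$ (using $\Tr(V)=0$), the integer $r_1+r_2$, where $r_i=\mathrm{rank}(E_i)$, is divisible by $p$ and bounded by $2n=p+1$, so it lies in $\{0,p\}$. The value $0$ forces $C=V$ to be nilpotent, putting $0$ into $\mathrm{spec}(C)$ and contradicting the hypothesis. Hence $r_1+r_2=p=2n-1$, forcing (up to relabeling) $r_1=n$ and $r_2=n-1$. Therefore $E_1=I_n$, and $F:=I_n-E_2$ is a rank-one idempotent satisfying $C-2I_n=V-F$.

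Next I would exploit $V^{n-1}=0$ together with this rank-one perturbation structure. The nilpotence condition forces every Jordan block of $V$ to have size $\leq n-1$, so $V$ has at least two Jordan blocks and $\mathrm{rank}(V)\leq n-2$; hence $\mathrm{rank}(V-F)\leq n-1$, so $2\in\mathrm{spec}(C)$. For more precise control, write $F=uv^{T}$ with $v^{T}u=1$ and apply the matrix determinant lemma together with $\mathrm{adj}(xI-V)=\sum_{j=0}^{n-1}V^{j}x^{n-1-j}$, which is valid because $V$ is nilpotent, to obtain
\[
\chi_{V-F}(x)=x^{n}+\sum_{j=0}^{n-1}\alpha_j\,x^{n-1-j},\qquad \alpha_j:=v^{T}V^{j}u,
\]
with $\alpha_0=1$. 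The assumption $V^{n-1}=0$ kills $\alpha_{n-1}$, so $\chi_{V-F}(x)=x\,g(x)$ where $g(x)=x^{n-1}+x^{n-2}+\alpha_1 x^{n-3}+\cdots+\alpha_{n-2}$, and hence $\chi_C(x)=(x-2)\,g(x-2)$.

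To close, I would use that $C$ being a companion matrix makes $V-F=C-2I_n$ nonderogatory, so the eigenvalue $0$ of $V-F$ has geometric multiplicity exactly $1$; this forces $\mathrm{rank}(V-F)=n-1$ and $\mathrm{rank}(V)=n-2$. In particular $V$ is pinned to a two-block Jordan type, and the explicit realizability formulas for $\alpha_j=v^{T}V^{j}u$ in such a basis can be confronted with the conditions $g(-2)\neq 0$ (equivalent to $0\notin\mathrm{spec}(C)$) and $g(-1)\neq 0$ (equivalent to $1\notin\mathrm{spec}(C)$). The main obstacle, and the crux of the proof, is showing that these two non-vanishing constraints on $g(-2)$ and $g(-1)$, combined with the rigid structure of $(\alpha_j)_{j=1}^{n-2}$ imposed by the Jordan form of $V$ and the normalization $\alpha_0=1$, cannot simultaneously hold. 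I expect this to reduce to a concrete polynomial identity in $\F_p$ (involving the shift by $2$ and coming from the expansion of $(x-2)g(x-2)$ against the actual coefficients of $\chi_C$) whose failure produces the desired contradiction.
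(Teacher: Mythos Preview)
Your opening is exactly the paper's argument: from $\Tr(C)=0$ you get $r_1+r_2\in\{0,p\}$, rule out $0$ using $0\notin\mathrm{spec}(C)$, and conclude $\{r_1,r_2\}=\{n-1,n\}$. (The paper contains arithmetic slips here, writing ``$1\cdot 1=(2n-1)\cdot 1$'' and listing $\{0,1\}$ as the small case, but the method is identical.) From this point the paper finishes in one line that you are missing: writing $2I_n-C=(I_n-E_1)+(I_n-E_2)+(-V)$ and using rank subadditivity together with $\mathrm{rk}(V)\le n-2$ gives $(n-r_1)+(n-r_2)\ge 2$, i.e.\ $r_1+r_2\le 2n-2$, contradicting $r_1+r_2=2n-1$. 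This step uses that $2\notin\mathrm{spec}(C)$, which is precisely what the paper invokes; the hypothesis ``does not have $0$ or $1$ as eigenvalue'' in the stated proposition is almost certainly a typo for ``$0$ or $2$''.

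Under the hypothesis as literally written, your final programme cannot be completed: the ``concrete polynomial identity'' you hope for does not exist, because the proposition is then false. For $p=5$, $n=3$, the companion matrix $C$ with characteristic polynomial $(x-2)(x-4)^2$ has $\Tr(C)=0$ and avoids the eigenvalues $0,1$, yet it admits $C=I_3+E_2+V'$ with $(V')^2=0$: take $V=\left(\begin{smallmatrix}0&0&0\\1&0&0\\0&0&0\end{smallmatrix}\right)$ and $F=e_1(e_1+4e_2)^T$, check that $V-F$ is nonderogatory with characteristic polynomial $x(x-2)^2$, hence similar to $C-2I_3$, and conjugate. So your last paragraph is not merely incomplete but is aiming at a contradiction that is not there. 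Under the intended hypothesis (excluding $2$ rather than $1$), all of your matrix-determinant-lemma machinery is unnecessary: you already observed that $V^{n-1}=0$ forces $2\in\mathrm{spec}(C)$, and that \emph{is} the contradiction.
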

\begin{proof}
There exist $k_1,$ and $k_2$ such that $$E_1 \sim \diag (\underbrace{1,\dots,1 }_{k_1 \textrm{-times}} ,0,\dots,0),$$ and $$E_2\sim \diag (\underbrace{1,\dots,1 }_{k_2 \textrm{-times}} ,0,\dots,0).$$ Then $\Tr(E_1)=(rank E_1)\cdot 1=k_1\cdot 1,$ and
$\Tr(E_2)=(rank E_2)\cdot 1=k_2\cdot 1.$
Since $\Tr(V)=0,$ we obtain the following
$$1\cdot 1=(2n-1)\cdot 1=\Tr(C)=(k_1+k_2)\cdot 1.$$
We derive that $\{k_1,k_2\}=\{n-1,n\},$ or $\{k_1,k_2\}=\{0,1\}.$

 Assume $V^{n-1}=0,$ then $V$ has at least two blocks in its Frobenius normal form, and therefore at least two zero lines in this form. We derive that $rk(V)\leq n-2.$

Since $0$ is not an eigenvalue for $C,$ we obtain that
$$n=rk(C)\leq rk(E_1)+rk(E_2)+rk(V).$$
Hence $rk(E_1)+rk(E_2)\geq 2,$ and therefore $\{k_1,k_2\}\neq\{0,1\}.$

  Moreover $2$ is not an eigenvalue for $C,$ and we have:
$$n=rk(2I_n-C)\leq rk(I_n-E_1)+rk(I_n-E_2)+rk(-V).$$
Hence $rk(I_n-E_1)+rk(I_n-E_2)\geq 2,$ and therefore $\{k_1,k_2\}\neq\{n-1,n\}.$

In conclusion $V^{n-1}\neq 0.$
\end{proof}

\end{document}